\newtheorem*{thm*}{Theorem}
\newtheorem{lemma}{Lemma}
\newtheorem{lemma*}{Lemma}
\newtheorem*{pro*}{Proposition}
\newtheorem{pro}{Proposition}
\newtheorem{cor}{Corollary}
\theoremstyle{definition}
\newtheorem{example}{Example}
\renewcommand {\Im}{\mathop{\mathrm{Im}}\nolimits}
\renewcommand {\Re}{\mathop{\mathrm{Re}}\nolimits}
\newcommand {\Log}{\mathop{\mathrm{Log}}\nolimits}
\author{Bernd Martin}
\address{Institute of Mathematics, Brandenburg University of Technology Cottbus, PF 101344, 03013 Cottbus   Germany}
\email{martinb@tu-cottbus.de}
\author{Dmitry Pochekutov}
\address{Institute of Core Undergraduate Programmes, Siberian Federal University,  av. Svobodny 79, Krasoyarsk, 660041 
    Russia}
\email{ potchekutov@gmail.com}
\thanks{supported by the research school 'Hybrid Systems' of BTU Cottbus}
\title{Discriminant and Singularities of  Logarithmic Gauss Map, Examples and Application}
\date{}
\keywords{Singularities, Discriminant, Asymptotics}
\begin{document}
\maketitle

\begin{abstract}
The study of hypersurfaces in a torus leads to the beautiful 
zoo of amoebas and their contours, whose possible configurations are seen from combinatorical data. 
There is a deep connection to the logarithmic Gauss map and its critical points. 
The theory has a lot of applications in many directions. 

In this report we recall basic notions and results from the theory of amoebas, 
show some connection to algebraic 
singularity theory and discuss some consequences from the well known 
classification of singularities to this subject.
Moreover, we have tried to compute some examples using the 
computer algebra system {\sc Singular} and discuss different 
possibilities and their effectivity to compute the critical points.
Here we meet an essential obstacle: 
Relevant examples need real or even rational solutions, 
which are found only by chance.
We have tried to unify different views to that subject.
\end{abstract}

\section{Toric hypersurface and Logarithmic Gauss map}

Let $V^*(f)$ be an algebraic hypersurface in the algebraic torus $\mathbb{T}^n$,
$\mathbb{T}:=\mathbb{C}^*$, i.e.
$$
	V^*(f)=\{z\in\mathbb{T}^{n}\,| \, f(z)=0\},
$$
where $f(z)$ is the Laurent polynomial. 

Recall that the \textit{Newton polyhedron} 
${\mathcal N}_f \subset \mathbb{R}^n$ of $f$ is the convex hull in $\mathbb{R}^n$ 
of $A_f:=\textup{supp}(f)\subset \mathbb{Z}^n$. 
 Let $\mathbb{X}_\Sigma$ be the smooth toric variety associated to the  fan $\Sigma$, which 
 is a refinement of the fan
 dual to the Newton polyhedron ${\mathcal N}_f$. 
We denote by $\overline{V}(f)\subset\mathbb{X}_\Sigma$ 
the closure of $V^*(f)$ in $\mathbb{X}_\Sigma$. The polynomial $f$ is called 
\textit{non-singular for its Newton polyhedron}, if $V^*(f)$ is smooth 
and for any face $\Delta\subset \mathcal{N}_f$ 
the truncation $f^{(\Delta)}$ of $f$ to the face $\Delta$ has 
non-vanishing Jacobian at all $z\in\overline{V}(f)\cap \mathbb{X}_\Delta$:
$$ (z_1\partial f^{(\Delta)}/\partial z_1, \ldots , 
z_n\partial f^{(\Delta)}/\partial z_n)\neq 0.$$
In accordance with singularity resolution theorem,   cf. \cite[page~291]{Ho},
a generic polynomial  $f$ is non-singular for its Newton  polyhedron and then $\overline{V}(f)$ is non-singular. 

Next we introduce the so-called \textit{logarithmic Gauss map} 
$\gamma_f:V^*(f)\to \mathbb{P}^{n-1}$.
Let $ {\mathbf t}^n$ denote the Lie algebra of ${\mathbb T}^n$,  
which is identified with the tangent space of ${\mathbb T}^n$ at the unit point ${\bf e}$.
For any point $z\in V^*$ shift the tangent space $T_z(V^*)$ by the torus multiplication 
(with $z^{-1}$) to a hyperplane  $h_z\subset{\mathbf t}^n$, inducing a point 
in the projective space of the dual $ {\mathbf t}^{n*}$, which we define to be $\gamma(z):=h_z^* \in
\mathbb{P}^{n-1}:=\mathbb{P}({\mathbf t}^{n*})$.
In coordinates of ${\mathbb T}^n$ the map $\gamma_f$ is given by 
$$ 
	\gamma_f(z)=\left(z_1 f_{z_1}:\ldots :z_n f_{z_n}\right)\in \mathbb{P}^{n-1}.
$$
Described in more geometric terms we have: Let $U\subset \mathbb{T}^n$ 
be a neighbourhood of a regular point $z$ on $V^*(f)$. 
Choose a branch of the logarithmic map (restricted to $U$) 
$ \log:U\to \mathbb{C}^n$, then the 
direction of the normal line at $\log(z)$ to transformed hypersurface $\log(V^*(f)\cap U)$
has components $\left(z_1 f_{z_1},\ldots ,z_n f_{z_n}\right)$. This construction 
does not depend on the choice of the branch of $\log$.

In \cite{Mi1}, 3.2, one can find the idea of a construction, how to extend  $\gamma_f$ 
in the  non-singular case to a finite map.
$$
	\overline{\gamma}_f: \overline{V}(f)\to \mathbb{P}^{n-1}.
$$ 
 Having a finite map $\gamma$ to a smooth variety, one can associate the {\em
 ramification locus} or the {\em  discriminant} as image of the critical locus:
 $\mathcal{D}:=\gamma({\mathcal{C}_\gamma})$, which is usually a hypersurface. 
 An analytic structure which is compatible with base chance was introduced by Teissier, cf. \cite{Tes}:
 The structure sheaf ${\mathcal O_D}$ is defined to be the quotient by the $0$-th 
 fitting ideal of $\gamma_*({\mathcal O_C})$.  
 In local coordinates the defining equation is obtained as the (classical)
 discriminat of the polynomial, that generates the finite extension of the structure sheafs
 over an open affine subsets.

From the well-known theorem of Kouchnirenko, cf. \cite[Th.~3]{Kou},
Mikhalkin obtains:
	\begin{pro}[\cite{Mi1}]
	\label{lemma:1} 
	If the polynomial $f$ is non-singular for its Newton polyhedron, then
	the degree of $\overline{\gamma}_f$ is obtained as $$\deg(\overline{\gamma_f})=
	n!\cdot \textup{Vol}({\mathcal N}_f).$$
	\end{pro}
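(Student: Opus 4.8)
The plan is to realise $\deg(\overline{\gamma}_f)$ as the number of critical points of a generic logarithmic–linear function on $V^*(f)$, and then to evaluate that number by means of the Euler–characteristic formula of Kouchnirenko. So, first fix a generic $[a]=[a_1:\dots:a_n]\in\PP^{n-1}$, chosen off the discriminant $\mathcal D$; since $\overline{\gamma}_f$ is finite, the fibre $\overline{\gamma}_f^{-1}([a])$ then consists of exactly $\deg(\overline{\gamma}_f)$ reduced points. I would first check that none of them lies on the toric boundary $\overline V(f)\setminus V^*(f)$: by the construction of $\overline{\gamma}_f$ in \cite{Mi1}, which is compatible with passage to truncations at infinity, the values of $\overline{\gamma}_f$ along the boundary divisor attached to a proper face $\Delta\subset\mathcal N_f$ lie in the projectivisation of the linear span of $\Delta$, hence in a fixed proper subspace of $\PP^{n-1}$, which a generic $[a]$ avoids. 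Thus $\overline{\gamma}_f^{-1}([a])=\gamma_f^{-1}([a])\subset V^*(f)$. For $z\in V^*(f)$ the truncation hypothesis with $\Delta=\mathcal N_f$ gives $(z_1f_{z_1},\dots,z_nf_{z_n})\neq0$; this vector spans, in logarithmic coordinates, the conormal line to $V^*(f)$ at $z$, so $\gamma_f(z)=[a]$ says precisely that the closed holomorphic $1$-form $\omega_a:=\sum_i a_i\,dz_i/z_i$ on $\TT^n$ restricts to a form on $V^*(f)$ vanishing at $z$. Hence $\deg(\overline{\gamma}_f)$ is the number of critical points on $V^*(f)$ of the multivalued function $L_a(z)=\sum_i a_i\log z_i$.

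\emph{The critical count is an Euler characteristic.} For generic $a$ the restriction $\omega_a|_{V^*(f)}$ has only nondegenerate zeros, so $L_a|_{V^*(f)}$ is a Morse function and the number of its critical points is independent of the generic choice. Compactifying $V^*(f)$ to $\overline V(f)$ inside the smooth toric variety $\mathbb{X}_\Sigma$, the form $\omega_a$ extends with at most logarithmic poles along the boundary divisors, to which $\overline V(f)$ is transverse because $f$ is non-singular for $\mathcal N_f$; a Poincar\'e--Hopf count for a logarithmic $1$-form then gives
$$\#\{\text{critical points of }L_a|_{V^*(f)}\}=(-1)^{\,n-1}\,\chi\bigl(V^*(f)\bigr).$$

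\emph{Kouchnirenko's formula and conclusion.} Being non-singular for its Newton polyhedron, $f$ is in particular non-degenerate with respect to $\mathcal N_f$, and the theorem of Kouchnirenko \cite[Th.~3]{Kou} yields
$$\chi\bigl(V^*(f)\bigr)=(-1)^{\,n-1}\,n!\,\textup{Vol}(\mathcal N_f)$$
(for $n=1$ this is the number of nonzero roots of $f$, equal to the lattice length of $\mathcal N_f$; for $n=2$ it reduces to Pick's formula together with Khovanskii's genus count). Combining the three steps, $\deg(\overline{\gamma}_f)=(-1)^{n-1}\chi(V^*(f))=n!\,\textup{Vol}(\mathcal N_f)$.

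The delicate point is the middle step: one must make sure that no critical point of $L_a|_{V^*(f)}$ escapes to $\overline V(f)\setminus V^*(f)$ and that the zero count of the logarithmic $1$-form on the \emph{non-compact} variety $V^*(f)$ is genuinely $(-1)^{n-1}\chi$, and not this quantity modified by residues along the boundary divisors --- and it is exactly here that non-degeneracy with respect to $\mathcal N_f$ is used. A route that avoids this estimate is to encode the fibre as the Laurent system $f=0$, $\;z_if_{z_i}-a_iz_nf_{z_n}=0$ $(i=1,\dots,n-1)$, whose Newton polytopes all lie inside $\mathcal N_f$; the Bernstein--Kouchnirenko bound then immediately gives $\deg(\overline{\gamma}_f)\le n!\,\textup{MV}(\mathcal N_f,\dots,\mathcal N_f)=n!\,\textup{Vol}(\mathcal N_f)$, after which one is left to verify that this system of special shape still attains its mixed-volume bound --- once more a non-degeneracy check.
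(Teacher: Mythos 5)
Your proof is essentially Mikhalkin's own argument from \cite{Mi1} (\S 3.2), which this paper cites but does not reproduce: identify $\deg(\overline\gamma_f)$ with the number of critical points of a generic $L_a=\sum a_i\log z_i$ restricted to $V^*(f)$, equate that count with $(-1)^{n-1}\chi\bigl(V^*(f)\bigr)$, and conclude by Kouchnirenko's Euler-characteristic formula. You have correctly reconstructed the standard route, supplied more detail than the source, and correctly flagged the one genuinely delicate step (that the zero count of the logarithmic $1$-form on the non-compact $V^*(f)$ is not perturbed by boundary contributions, which is where non-degeneracy for $\mathcal N_f$ enters).
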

For later calculation we give a description of the logarithmic Gauss map 
$\gamma_f$ in local coordinates.
Since $V^*(f)$ is smooth, we may assume w.l.o.g. 
$f_{z_n}:=\partial f/\partial z_n\neq 0$ locally, then  
a function $g(z')$, $z':=(z_1,\ldots ,z_{n-1})$, exists, such that $f(z',g(z'))\equiv 0$.
Hence, $g_{z_i}=-f_{z_i}(z',g)/f_{z_n}(z',g)$ and $(\log(g(z'))_{z_i}=g_{z_i}/g$ hold, and
one obtains the formula
	\begin{equation}
	\label{eq:a}
		\gamma_f(z')=\left(-z_1\frac{\partial \log g(z')}{\partial z_1}:\,	
		\ldots\, :-z_{n-1}\frac{\partial \log g(z')}{\partial z_{n-1}}:1\right).
	\end{equation} 

Then the fiber $\gamma^{-1}_f(y),\ y=(y_1:\ldots:y_n)\in \mathbb{P}^{n-1}$ is given by the zeros of the
local complete intersection ideal generate by $f$ and the 2-minors of 
$$\left(\begin{array}{ccc} z_1f_{z_1}& \ldots & z_nf_{z_n}\\ y_1 & \ldots & y_n\end{array}\right),$$
i.e. (in case of $y_n\neq 0$) $\gamma^{-1}(y)$ is defined by the complete intersection ideal 
\begin{equation}
	\label{eq:b}
	I_y:=(f,h_1,\ldots h_{n-1}),
\end{equation}	 	
where $h_i=y_n z_i f_{z_i}-y_i z_n f_{z_n}$. There are at most $n!\cdot\textup{Vol}({\mathcal N}_f)$
zeros in the torus by Proposition~1.

\section{Amoeba and its Contour versus Laurent series}

Consider a rational function $F(z)=h(z)/f(z)$ of $n$ complex variables and different 
Laurent expansions 
	\begin{equation}
	\label{eq:1}
		\sum_{\alpha\in \mathbb{Z}^n} c_\alpha z^\alpha
	\end{equation}
of $F$ centered at $z=0$. The most natural way to describe these expansions 
uses the amoeba of polar hypersurface $V^*=V^*(f)$. 
 
Recall, that the \textit{amoeba} $\mathcal{A}_{V^*}$ of a toric hypersurface
$V^*=V^*(f)$ is the image of $V^*$ by the logarithmic map 
$\textup{Log}:\mathbb{T}^n\to\mathbb{R}^n$, 
	$$
		\textup{Log}: (z_1,\ldots, z_n) \mapsto (\log|z_1|,\ldots, \log|z_n|).
	$$

The complement $\mathbb{R}^n -\mathcal{A}_{V^*}$ to the amoeba consists 
of a finite number of connected components~$E_i$, 
which are open and convex, cf.~\cite[Section~6.1]{Gelfand}.
These components are characterized in the following Proposition, 
 which is a summary of 
Propositions~2.5, 2.6 in \cite{FPT}, Theorem~10 and Corollary~6 in \cite[Section~I.5]{Ru}.
	\begin{pro}
	\label{thm:1}
		There exists an open subset $U_\mathcal{N}$ in the set of polynomials with fixed Newton 
		polyhedron
		$\mathcal{N}$ exists that satisfies  the following property:
		
		If $f\in U_\mathcal{N}$, then   
		there is a bijection
		from the set of lattice points of $\mathcal{N} \cap\mathbb{Z}^n$ to the set of components of
		$\mathbb{R}^n -\mathcal{A}_{V^*(f)}:\ 
		\nu \mapsto E_\nu$,
		such that the normal cone 
		$C^{\vee}(\nu)$ to $\mathcal{N}_f$
		at the point $\nu$ is the recession cone of the component $E_\nu$. 
	\end{pro}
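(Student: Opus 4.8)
The plan is to extract the bijection from the \emph{Ronkin function}
$$
N_f(x)=\frac{1}{(2\pi i)^{n}}\lint_{\textup{Log}^{-1}(x)}\log|f(z)|\,\frac{dz_1}{z_1}\wedge\cdots\wedge\frac{dz_n}{z_n},
$$
which is convex on $\RR^{n}$ and, on each connected component $E$ of $\RR^{n}-\mathcal{A}_{V^*(f)}$, is affine with integer gradient
$$
\nu_j(E)=\frac{1}{(2\pi i)^{n}}\lint_{\textup{Log}^{-1}(x)}\frac{z_jf_{z_j}(z)}{f(z)}\,\frac{dz_1}{z_1}\wedge\cdots\wedge\frac{dz_n}{z_n},\qquad x\in E,
$$
the \emph{order} $\nu(E)\in\ZZ^{n}$ of $E$. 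The first step is that $E\mapsto\nu(E)$ is injective: if $\nu(E_1)=\nu(E_2)=:\nu$ and $p_i\in E_i$, the one-variable convex function $t\mapsto N_f((1-t)p_1+tp_2)-\langle\nu,(1-t)p_1+tp_2\rangle$ has vanishing derivative at both endpoints, hence is constant, so $N_f$ is affine along $[p_1,p_2]$; but the set on which $N_f$ fails to be locally affine is exactly $\mathcal{A}_{V^*(f)}$, so $[p_1,p_2]$ lies in a single complement component and $E_1=E_2$. The companion estimate that $N_f-S_f$ is bounded on $\RR^{n}$, where $S_f(x)=\max_{\alpha\in A_f}\bigl(\log|c_\alpha|+\langle\alpha,x\rangle\bigr)$ is the \emph{spine}, shows the Legendre transform of $N_f$ is finite precisely on $\mathcal{N}_f$, so every order $\nu(E)$ lies in $\mathcal{N}_f\cap\ZZ^{n}=\mathcal{N}\cap\ZZ^{n}$.

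Next I would pass to the spine. The maximal affine pieces of the piecewise-affine convex function $S_f$ are indexed by the lattice points occurring as vertices of the regular subdivision of $\mathcal{N}$ induced by the heights $-\log|c_\alpha|$, so the complement of the tropical hypersurface $\Gamma_{S_f}$ has exactly one component $R_\nu=\{x:\log|c_\nu|+\langle\nu,x\rangle\ge\log|c_\alpha|+\langle\alpha,x\rangle\ \forall\,\alpha\in A_f\}$ per such vertex $\nu$. Because $S_f$ approximates $N_f$ within a bounded error, the components of $\RR^{n}-\mathcal{A}_{V^*(f)}$ are in order-preserving bijection with the $R_\nu$; in particular the order map is onto the vertex set of that subdivision — which always contains the vertices of $\mathcal{N}$ — and it is onto all of $\mathcal{N}\cap\ZZ^{n}$ exactly when every lattice point of $\mathcal{N}$ is a vertex of the subdivision.

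The main step is therefore to produce $U_\mathcal{N}$. Viewing $f$ through its coefficient vector $(c_\alpha)$, the induced subdivision of $\mathcal{N}$ is constant on each open cone of the secondary fan of $\mathcal{N}\cap\ZZ^{n}$; I would fix a maximal cone $\sigma$ corresponding to a triangulation all of whose vertices are lattice points of $\mathcal{N}$ (for instance a unimodular one) and let $U_\mathcal{N}$ consist of the polynomials non-singular for $\mathcal{N}$ whose height vector $-\log|c_\alpha|$ lies in $\sigma$. This set is open, and it is nonempty: Viro's patchworking — take $c_\alpha=t^{\rho(\alpha)}$ with $t>0$ small and $\rho$ a convex function on $\mathcal{N}\cap\ZZ^{n}$ realising the chosen triangulation — produces an explicit member, small $t$ keeping the height vector in $\sigma$ while preserving non-singularity. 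For $f\in U_\mathcal{N}$ every lattice point of $\mathcal{N}$ is a vertex of the subdivision, so by the previous paragraph the order map is a bijection $\nu\mapsto E_\nu$ between $\mathcal{N}\cap\ZZ^{n}$ and the complement components. This paragraph, together with the order-preserving correspondence between amoeba and spine complements, is where the real work lies; the other steps are soft consequences of convexity of $N_f$.

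It remains to identify recession cones. For $f\in U_\mathcal{N}$ and $\nu\in\mathcal{N}\cap\ZZ^{n}$, the recession cone of $R_\nu$ is $\{u\in\RR^{n}:\langle\nu-\alpha,u\rangle\ge0\ \text{for all }\alpha\in A_f\}$, which is precisely the normal cone $C^{\vee}(\nu)$ to $\mathcal{N}_f$ at $\nu$. Since $E_\nu$ and $R_\nu$ differ by a bounded set, $E_\nu$ has the same recession cone $C^{\vee}(\nu)$, and the bijection of the previous step matches $E_\nu$ with the vertex $\nu$ carrying exactly that normal cone, which completes the proof.
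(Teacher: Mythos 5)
The paper does not actually prove Proposition~\ref{thm:1}; it is explicitly presented as a summary of Propositions 2.5--2.6 of Forsberg--Passare--Tsikh and Theorem~10 / Corollary~6 of Rullg{\aa}rd, and no argument is given. Your reconstruction of the standard machinery is largely sound: the Ronkin function $N_f$, the order map as its gradient on complement components, injectivity from convexity together with the theorem that $N_f$ fails to be locally affine exactly on $\mathcal{A}_{V^*(f)}$, the inclusion of orders in $\mathcal{N}\cap\mathbb{Z}^n$ via a spine estimate, and the recession-cone identification at the end.

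There is, however, a genuine gap precisely where you acknowledge the real work lies. You define $U_\mathcal{N}$ as the set of non-singular $f$ whose height vector $(-\log|c_\alpha|)_\alpha$ lies in a fixed maximal open cone $\sigma$ of the secondary fan for a triangulation using every lattice point. Membership in $\sigma$ does fix the combinatorics of the tropical curve $\Gamma_{S_f}$, so $\mathbb{R}^n-\Gamma_{S_f}$ has a component $R_\nu$ for every $\nu\in\mathcal{N}\cap\mathbb{Z}^n$. But the step ``because $S_f$ approximates $N_f$ within bounded error, the components of $\mathbb{R}^n-\mathcal{A}_{V^*(f)}$ are in bijection with the $R_\nu$'' is false: the error bound (of size roughly $\log\#A_f$) does not shrink near the walls of $\sigma$, and a bounded $R_\nu$ of small diameter can be entirely swallowed by the amoeba. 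Concretely, with $n=1$ and $f=1+\tfrac{3}{2}z+z^2$: the roots are $(-3\pm i\sqrt{7})/4$, both of modulus $1$, so $\mathcal{A}_{V^*(f)}=\{0\}$ and there are only two complement components, of orders $0$ and $2$. Yet the heights $(0,-\log\tfrac{3}{2},0)$ lie strictly inside the secondary cone of the full triangulation $\{[0,1],[1,2]\}$ of $[0,2]$, so this $f$ belongs to your $U_\mathcal{N}$ while the conclusion fails for $\nu=1$. The correct $U_\mathcal{N}$ (Rullg{\aa}rd's Theorem~10) is the set of $f$ for which the order map is already onto $\mathcal{N}\cap\mathbb{Z}^n$; this is open because the order of a complement component, given by your integral formula, is locally constant in the coefficients, and it is nonempty by exactly the Viro construction you invoke with $t$ taken sufficiently small. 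Your Viro step thus exhibits a member, but it has to be combined with the local-constancy argument for openness; the surrounding claim that the entire secondary cone works is what breaks.
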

A recession cone is the maximal cone which can be put inside $E_\nu$ by a translation.
If $f\not\in U_\mathcal{N}$, the expected component $E_\nu$ may not exist for some non-vertice 
lattice points $\nu\in\mathcal{N}$,
 because the associated Laurent series below does not converge. 

Given a component $E_\nu$ one obtains a Laurent 
series of $F$ centered at $z=0$ using the 
term $a_\nu z^\nu$ of~$f$ as denominator in a corresponding geometric progression  
	\begin{equation}
	\label{eq:2}
		\frac{1}{f}=\sum_{k=0}^\infty 
		\frac{\left(a_\nu z^\nu-f\right)^k}{(a_\nu z^\nu)^{k+1}}.
	\end{equation}
The set $\{\textup{Log}^{-1}(E_\nu)\}$ contains the domain of convergence for 
this Laurent series.
The support of expansion (\ref{eq:2}) is the minimal cone $K_\nu$, which after a translation 
by $\nu$ contains the face $\Delta$ of 
$\mathcal{N}_f$, which has $\nu$ as interior point.

A non-zero vector $q\in\mathbb{Z}^n\cap K_\nu$ defines a so-called
\textit{diagonal subsequence} $\{c_{k\cdot q}\}_{k\in\mathbb{N}}$ of the set of coefficients 
of  expansion (\ref{eq:1}). We will discuss its asymptotic in the next section.

The set of critical values of the map $\textup{Log}$ restricted to  $V^*$ is called 
the \textit{contour} $\mathcal{C}_{V^*}$ of the amoeba $\mathcal{A}_{V^*}$ (see \cite{PT}). 
The contour is closely related to the logarithmic Gauss map $\gamma_f$.
Recall Lemma~3 from \cite{Mi1}.
	\begin{lemma}
	\label{lm:2}
		The preimage of the real points under the logarithmic Gauss map is mapped by $\textup{Log}$
		to the contour:
	 	$$\mathcal{C}_{V^*}=\textup{Log}\left(\gamma_f^{-1}(\mathbb{P}^{n-1}_\mathbb{R})\right).$$
	\end{lemma}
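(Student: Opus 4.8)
The plan is to prove the two inclusions separately, exploiting the local description of the logarithmic Gauss map in equation~(\ref{eq:a}) together with the fact that $\textup{Log}$ is the composition of the branch of $\log$ with taking the real part (coordinatewise). First I would fix a regular point $z\in V^*(f)$ and, as in the derivation of~(\ref{eq:a}), assume $f_{z_n}\neq 0$ near $z$, so that $V^*(f)$ is locally the graph $z_n=g(z')$ with $z'=(z_1,\dots,z_{n-1})$. The key observation is that $\textup{Log}|_{V^*}$, expressed in the local chart $z'$, factors as $z'\mapsto w'=\log z'$ (a local biholomorphism onto an open set of $\CC^{n-1}$, for a chosen branch) followed by the map $w'\mapsto \bigl(\Re w',\ \Re\log g(e^{w'})\bigr)\in\RR^n$. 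So computing the critical points of $\textup{Log}|_{V^*}$ amounts to computing where the real-linear differential of this last map drops rank.

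Next I would carry out that rank computation. Writing $G(w'):=\log g(e^{w'})$, the differential of $w'\mapsto(\Re w',\Re G(w'))$ at a point has matrix (over $\RR$, splitting each $w_j$ into real and imaginary parts) whose rank is $n$ unless the $\RR$-linear functional $d(\Re G)=\Re(dG)$ is an $\RR$-linear combination of $d(\Re w_1),\dots,d(\Re w_{n-1})$ — equivalently, unless the $\CC$-linear form $dG=\sum_j \partial G/\partial w_j\, dw_j$ has all coefficients real. Here I would use the Wirtinger/Cauchy–Riemann bookkeeping: since $G$ is holomorphic, $\Re(dG)$ kills the $(0,1)$-directions, and the condition that $\Re(dG)$ lie in the span of the $\Re(dw_j)$ reduces exactly to $\partial G/\partial w_j\in\RR$ for all $j$. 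Translating back via $\partial G/\partial w_j = z_j\,\partial\log g/\partial z_j$, this says precisely that $\bigl(z_1\partial\log g/\partial z_1:\dots:z_{n-1}\partial\log g/\partial z_{n-1}:1\bigr)$ is a real projective point — that is, by~(\ref{eq:a}), that $z\in\gamma_f^{-1}(\PP^{n-1}_\RR)$. Since the branch choice does not affect $\gamma_f$ (as noted after~(\ref{eq:a})), both inclusions follow: $z$ is a critical point of $\textup{Log}|_{V^*}$ iff $\gamma_f(z)\in\PP^{n-1}_\RR$, and applying $\textup{Log}$ to both sides gives the claimed equality of sets $\mathcal{C}_{V^*}=\textup{Log}\bigl(\gamma_f^{-1}(\PP^{n-1}_\RR)\bigr)$.

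The main obstacle I expect is the linear-algebra step of showing cleanly that the $\RR$-linear map $w'\mapsto(\Re w',\Re G(w'))$ is singular exactly when all $\partial G/\partial w_j$ are real; one must be careful that "real" is the right condition in projective coordinates (the normalization to $1$ in the last slot of~(\ref{eq:a}) is what makes "all coefficients real" equivalent to "the projective point is real"), and one must handle the chart-dependence gracefully, i.e. check that the answer does not depend on which coordinate $z_n$ was singled out. A secondary, more bookkeeping-type point is the passage from the equality of sets of critical points on $V^*$ to the equality of their $\textup{Log}$-images, which is immediate once the pointwise characterization is established, together with the remark that $\gamma_f^{-1}(\PP^{n-1}_\RR)$ is precisely that set of critical points. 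Since the statement is quoted as Lemma~3 of~\cite{Mi1}, a short self-contained argument along these lines suffices and the reader can be referred there for a fuller treatment.
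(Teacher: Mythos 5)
Your proof is correct and takes essentially the same route as the paper: both factor $\textup{Log}|_{V^*}$ as a branch of $\log$ followed by $\Re$, and both reduce to a rank computation on the differential, concluding that it drops rank exactly when $\gamma_f(z)$ is a real projective point. The paper phrases the linear-algebra step as solvability of $\langle\gamma_f(z),t\rangle=0$ for $\Im t$ given arbitrary $\Re t$ on the tangent hyperplane, while you phrase it as the explicit real Jacobian of $w'\mapsto(\Re w',\Re G(w'))$ in the chart from~(\ref{eq:a}); these are two presentations of the same computation.
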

	\begin{proof}
	Let $z$ be a regular point on ${V^*}$ and $U$ its neighbourhood. Since the map 
	$\left.\textup{Log}\right|_{V^*}$ is a composition of  $\log: z \mapsto (\log(z_1),\ldots ,\log(z_n))$ 
	and the projection $\Re:\mathbb{C}^n\to\mathbb{R}^n$, the point $z$ is critical for 
	$\left.\textup{Log}\right|_{V^*}$ if the projection $d\Re: T_z \log\,(V^*\cap U)\to \mathbb{R}^n$ 
	is not surjective at $z$.  
A fiber $T_z \log\,({V^*}\cap U)$ of the tangent bundle of the image by $\log$ of the hypersurface 
$V^*$ is the hyperplane $$\{ t\in \mathbb{C}^n: \left<\gamma_f(z),t\right>=0\}.$$ 
For real $\gamma_f(z)$ the projection $d\Re$ is not surjective.
If $\gamma_f(z)$ is not real one can
consider $\left<\gamma_f(z),t\right>=0$ as a system of linear equations  
with fixed real part $\Re t,$ and solve it with respect to 
$\Im t.$ Hence, $z$ is not  critical for $\left.\textup{Log}\right|_{V^*}.$
\end{proof}
Therefore, the contour $\mathcal{C}_{V^*}$ can be computed as the $\textup{Log}$-image of 
the zeros of the ideal  
	\begin{equation}
	\label{eq:cv}
	(f,q_n z_1 f_{z_1}-q_1 z_n f_{z_n},\ldots, q_n z_{n-1} f_{z_{n-1}}-q_{n-1} z_n f_{z_n}),
	\end{equation}
where $q$ runs through all real points $(q_1:\ldots:q_n)\in \mathbb{P}_\mathbb{R}^{n-1}$, (here w.l.o.g $q_n\neq 0$).

\section{Singularities of Phase Function}

Consider the function 
	$$
		\Phi:\mathbb{P}^{n-1}\times V^*\longrightarrow \mathbb{C},\ 
		\Phi(y,z)=\left< y, \log z\right> .
	$$
Introduce the \textit{phase function}  $\varphi_q:=\Phi(q,-)$, later we show that it is indeed a phase function
of some oscilllating integral. Denote by $\textup{Crit}(\varphi_q)\subset V^*$
 the set of critical (or stationary) points of function $\varphi_q$. It coincides with the preimage of the  logarithmic Gauss map $\gamma_f$:

\begin{pro} \label{thm:3}
	
The relative critical locus of $\Phi$ coincides with the graph of $\gamma_f$:  
$$ \textup{Crit}_{\mathbb{P}^{n-1}}(\Phi)=\Gamma_{\gamma_f}.
$$
\end{pro}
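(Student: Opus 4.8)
The plan is to compute the relative differential of $\Phi$ along the fibers $\{q\}\times V^*$ and show that its vanishing is exactly the incidence condition defining the graph of $\gamma_f$. First I would fix a point $(q,z)\in\PP^{n-1}\times V^*$ and work in the local coordinates $z'=(z_1,\ldots,z_{n-1})$ introduced before formula~(\ref{eq:a}), where $f_{z_n}\neq 0$ and $z_n=g(z')$ solves $f(z',g(z'))\equiv 0$. In these coordinates $\varphi_q(z')=\sum_{i=1}^{n-1} q_i\log z_i + q_n\log g(z')$, so
\begin{equation*}
	z_i\frac{\partial\varphi_q}{\partial z_i}=q_i + q_n\, z_i\frac{\partial\log g(z')}{\partial z_i},\qquad i=1,\ldots,n-1.
\end{equation*}
Hence $z$ is a critical point of $\varphi_q$ (using the logarithmic derivatives $z_i\partial/\partial z_i$, which span the same relative cotangent space since all $z_i\in\TT$) precisely when $q_i = -q_n\, z_i\,\partial\log g/\partial z_i$ for every $i<n$, i.e. when $(q_1:\cdots:q_{n-1}:q_n) = (-z_1\partial\log g/\partial z_1 : \cdots : -z_{n-1}\partial\log g/\partial z_{n-1} : 1)$, which by~(\ref{eq:a}) is exactly $q=\gamma_f(z)$ in $\PP^{n-1}$.

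Next I would promote this pointwise computation to the scheme-theoretic statement $\textup{Crit}_{\PP^{n-1}}(\Phi)=\Gamma_{\gamma_f}$. One direction is immediate: a point of the graph has $q=\gamma_f(z)$, and the computation above shows $d_z\varphi_q=0$, so $\Gamma_{\gamma_f}\subseteq\textup{Crit}_{\PP^{n-1}}(\Phi)$. For the reverse inclusion, observe that for \emph{any} fixed $z\in V^*$ the equations $z_i\partial\varphi_q/\partial z_i=0$ are linear and homogeneous in $q$, and by the calculation they cut out a single point of $\PP^{n-1}$, namely $\gamma_f(z)$; thus the fiber of $\textup{Crit}_{\PP^{n-1}}(\Phi)$ over $z$ is exactly $\{\gamma_f(z)\}$, giving set-theoretic equality, and the linearity shows the defining ideal is generated by the same $2\times 2$ minors as in~(\ref{eq:b}), so the equality is scheme-theoretic. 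It is worth noting that the homogeneous coordinates $y$ on $\PP^{n-1}$ are legitimate here because $\Phi$ itself is only defined up to the choice of branch of $\log$, but the \emph{relative} derivative $d_z\Phi(y,-)$ is a well-defined section of $y_0\mathcal{O}\oplus\cdots$ twisted by $\mathcal{O}_{\PP^{n-1}}(1)$ and its zero locus is intrinsic — this is the same branch-independence already noted after the coordinate description of $\gamma_f$.

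The main obstacle, as I see it, is not the computation but making the bookkeeping of the relative critical locus precise: $\Phi$ is multivalued on $\PP^{n-1}\times V^*$, so one must be careful that ``$\textup{Crit}_{\PP^{n-1}}(\Phi)$'' means the vanishing locus of the relative (partial, along $V^*$) logarithmic differential, which \emph{is} single-valued, rather than anything involving $\Phi$ as an honest function. Once that is granted, the proof is the one-line identification of $z_i\partial\varphi_q/\partial z_i$ with the $i$-th minus-$q_n$-scaled component of the numerator of~(\ref{eq:a}). I would therefore spend the bulk of the written proof stating clearly what the relative critical locus is (passing, if desired, to a local branch of $\log$ on a neighbourhood $U\subset\TT^n$ of $z$, exactly as in the proof of Lemma~\ref{lm:2}), and then invoke formula~(\ref{eq:a}) to close both inclusions at once.
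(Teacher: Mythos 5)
Your proof is correct and follows essentially the same route as the paper: work in the local coordinates $z'$ with $f_{z_n}\neq 0$, compute the relative partial derivatives $\partial\Phi/\partial z_i$, and observe that their vanishing reproduces the defining equations of $\gamma_f^{-1}(y)$ via formula~(\ref{eq:a}). The extra care you take about the multivaluedness of $\Phi$ and the scheme-theoretic meaning of the relative critical locus is a welcome elaboration but does not change the underlying argument.
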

\begin{proof}
Assume $f_{z_n}\neq 0$, then we use local coordinates $z'$ on $V^*$, and consider the function $g(z')$ such that
 $f(z',g(z'))\equiv 0$.
	We obtain
$$
	\partial\Phi(z,y)
	/\partial z_i=\frac{y_i}{z_i}+\frac{y_n}{g(z')} \partial g(z')
	/\partial {z_i},
	\ i=1,\ldots, n-1.
$$ 
Up to a non-zero constant multiple the components of the gradient $\partial \Phi(z,y)/\partial z$ 
together with the defining polymonial $f(z)$ of $V^*$ 
give us the defining ideal (\ref{eq:cv}) of the fiber of $y$ by the logarithmetic Gauss map $\gamma$. 
\end{proof}
The last statement shows us that the $\textup{Log}$-image of $\textup{Crit}(\varphi_q)$ is contained in 
the contour $\mathcal{C}_{V^*}$ of the amoeba $\mathcal{A}_{V^*}$, and 
the tangent hyperplane to $\mathcal{C}_{V^*}$ at a point $\textup{Log}(z_0)$, $z_0\in  \textup{Crit}\,\varphi_q$,
has normal vector~$q\in\mathbb{Z}^n-\{0\}$.

Another consequence of the above formula concerns the connection between the
singularities in the fibers of the phase function and the fibers of the logarithmic Gauss
map:
\begin{pro}\label{thm:4}
 Let $(z_0,y_0)\in \Gamma_{\gamma_f}$ be a point of the graph of $\gamma_f$, 
 then the Jacobian matrix of $\gamma_f$
 at $z_0$ coincides with the Hesse matrix of $\varphi_{y_0}$ at $z_0$ up to
 multiplication with a regular constant diagonal matrix $D$:
 $$ \textup{Hess}(\varphi_{y_0})(z_0) =D \cdot \textup{Jac}(\gamma_f)(z_0). 
 $$   
\end{pro}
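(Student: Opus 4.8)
The plan is to work in the same local coordinates $z'=(z_1,\dots,z_{n-1})$ used in Proposition~\ref{thm:3}, assuming $f_{z_n}\neq 0$ near $z_0$ and writing $g(z')$ for the implicit solution of $f(z',g(z'))\equiv 0$. With $y_0$ fixed (and w.l.o.g.\ $y_{0,n}\neq 0$), the phase function is $\varphi_{y_0}(z')=\sum_{i<n}y_{0,i}\log z_i + y_{0,n}\log g(z')$, and from the computation in the proof of Proposition~\ref{thm:3} its gradient is $\partial\varphi_{y_0}/\partial z_i = y_{0,i}/z_i + y_{0,n}\,g_{z_i}/g$. First I would simply differentiate this once more with respect to $z_j$ to obtain the Hessian entry $\partial^2\varphi_{y_0}/\partial z_i\partial z_j$, and in parallel differentiate the local formula \eqref{eq:a} for $\gamma_f$, namely the components $\gamma_i(z')=-z_i\,\partial\log g/\partial z_i$ (the last component being constant $1$), to obtain the Jacobian entries $\partial\gamma_i/\partial z_j$.

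The key observation is that at a point $(z_0,y_0)$ on the graph $\Gamma_{\gamma_f}$ we have, by definition of $\gamma_f$, that $(y_{0,1}:\dots:y_{0,n}) = (z_1 f_{z_1}:\dots:z_n f_{z_n})$ evaluated at $z_0$, equivalently $y_{0,i}/z_i = \lambda\, (-\partial\log g/\partial z_i)$ for $i<n$ and $y_{0,n}/g(z_0) = \lambda$, for a single nonzero scalar $\lambda$ (this is exactly the passage between \eqref{eq:a} and the affine representative with last coordinate $1$). Substituting $y_{0,i} = -\lambda\, z_i\,\partial\log g/\partial z_i$ and $y_{0,n} = \lambda\, g(z_0)$ into the second-derivative formula, the two bracketed pieces $\partial_j(y_{0,i}/z_i)$ and $\partial_j(y_{0,n} g_{z_i}/g)$ reorganize, after factoring out $\lambda$, into $-\lambda$ times $\partial_j(z_i\,\partial\log g/\partial z_i) = -\partial_j\gamma_i$. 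More precisely I expect to see that, column by column or row by row, $\mathrm{Hess}(\varphi_{y_0})(z_0)$ equals $\mathrm{Jac}(\gamma_f)(z_0)$ multiplied by a diagonal matrix whose entries are built from the $y_{0,i}/z_i$ (or equivalently from $\lambda$ and the $z_i$); these entries are nonzero precisely because $z_0$ lies in the torus $\mathbb{T}^n$ and $\lambda\neq 0$, so $D$ is a regular constant diagonal matrix as claimed. I would then record which side the diagonal factor acts on, matching the statement $\mathrm{Hess}(\varphi_{y_0})(z_0)=D\cdot\mathrm{Jac}(\gamma_f)(z_0)$.

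The main obstacle is bookkeeping rather than conceptual: one must be careful that the terms coming from differentiating the explicit $1/z_i$ and the terms coming from differentiating $g_{z_i}/g$ really do combine into a single multiple of $\partial_j(z_i\,\partial\log g/\partial z_i)$, and in particular that the $y_{0,n}$-term (which carries the only dependence on the $n$-th coordinate) does not spoil the diagonal structure of $D$. The cleanest way to handle this is to not expand $g$ at all but to keep everything in terms of $\log g$: write $\varphi_{y_0} = \langle y_0',\log z'\rangle + y_{0,n}\log g$, note $\gamma_i = -z_i\,(\log g)_{z_i}$, and observe that on $\Gamma_{\gamma_f}$ the vector $(y_{0,1}/z_1,\dots,y_{0,n-1}/z_{n-1})$ is $-\lambda$ times $((\log g)_{z_1},\dots,(\log g)_{z_{n-1}})$ while $y_{0,n}=\lambda g$; then $\partial^2\varphi_{y_0}/\partial z_i\partial z_j$ becomes $\partial_j\!\big(y_{0,i}/z_i + \lambda\,(\log g)_{z_i}\big)$, and since $y_{0,i}/z_i = -\lambda(\log g)_{z_i} \cdot (z_i^{-1}\!\cdot z_i)$—being slightly more careful, $y_{0,i}/z_i$ and $\lambda(\log g)_{z_i}$ differ only by the factor that will form $D$—the sum telescopes to $-\lambda\,\partial_j(z_i(\log g)_{z_i})/z_i = \lambda\,\partial_j\gamma_i / z_i$, exhibiting $D = \mathrm{diag}(\lambda/z_1,\dots,\lambda/z_{n-1})$ (up to a harmless relabeling of which matrix the diagonal multiplies). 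Verifying this last telescoping identity carefully, and thereby pinning down $D$ exactly, is the only real content; everything else is the chain rule.
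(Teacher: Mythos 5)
Your proposal is correct and follows essentially the same approach as the paper: differentiate the local formula \eqref{eq:a} for $\gamma_f$ and the gradient of $\varphi_{y_0}$ once more, then use the graph relation $y_{0,i}/z_{0,i}=-y_{0,n}\,\partial\log g/\partial z_i$ to identify the diagonal factor, which the paper records as $d_i=-y_{0,n}/z_i$. The only slip is in your normalization of the scalar $\lambda$: since $\gamma_f(z')=(-z_1(\log g)_{z_1}:\ldots:-z_{n-1}(\log g)_{z_{n-1}}:1)$, the last coordinate gives $y_{0,n}=\lambda$ rather than $y_{0,n}=\lambda\,g(z_0)$, and accordingly $D=\textup{diag}(-y_{0,n}/z_i)$, consistent with the paper up to the sign you flagged as a bookkeeping point.
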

\begin{proof}
As before we assume $f_{z_n}(z_0)\neq 0$ and use local coordinates $z'$.
From~(\ref{eq:a}) we obtain the entries of the Jacobian matrix $\textup{Jac}(\gamma_f)$ of the map $\gamma_f$ 
	$$
		\textup{Jac}(\gamma_f)_{(i,j)}=-\left( z_i \frac{\partial^2 \log (g(z'))}{\partial z_i\partial z_j}+ 
		\delta_{ij}\frac{\partial\log(g(z'))}{\partial z_j}\right), i,j=1,\ldots, n-1,
	$$
	where $\delta_{ij}$ is the Kronecker symbol.
	Moreover, 
	$$
		\frac{\partial^2 \varphi_{y}}{\partial z_i\partial z_j}=y_{n}\frac{\partial^2 \log(g(z'))}
		{\partial z_i \partial z_j}
		-\delta_{ij}\frac{y_{i}}{z_i^2}
	$$	
	holds for the second derivatives of $\varphi_y$. 
	Since 
	$\displaystyle \frac{y_{0,i}}{z_{0,i}}=-y_{0,n} \frac{\partial \log(g(z'_0)) }{\partial z_i}$ at 
	a critical point $z_0$ of $\varphi_{y_0}$, we obtain the statement by putting 
	the $i$-th entry of $D$ to be $\displaystyle d_i=-\frac{y_{0,n}}{z_i}$.
\end{proof}

We obtain as corollary of the last proposition that for directions $y=q$ outside the ramification locus of
the logarithmic Gauss map $\gamma_f$ the phase function $\varphi_q$ has only Morse critical points.

\begin{cor}
	The logaritimic Gauss map $\gamma_f$ is umramified at $q\in  \mathbb{Z}^n-\{0\}$ iff
	 the phase function $\varphi_q$ has only  Morse
	critical points, e.g. non-degenerated singularities.
\end{cor}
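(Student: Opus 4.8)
The plan is to deduce this corollary directly from Proposition~\ref{thm:4}, which identifies the Hesse matrix of $\varphi_q$ at a critical point $z_0$ with the Jacobian matrix of $\gamma_f$ at $z_0$, up to left multiplication by an invertible diagonal matrix $D$. First I would recall the two defining facts: a critical point $z_0$ of $\varphi_q$ is \emph{Morse} (a non-degenerate singularity) exactly when $\operatorname{Hess}(\varphi_q)(z_0)$ is invertible; and $\gamma_f$ is \emph{unramified over} $q$ (equivalently, $q$ is not in the discriminant $\mathcal D$) exactly when $\gamma_f$ has no critical points in the fiber $\gamma_f^{-1}(q)$, i.e. when $\operatorname{Jac}(\gamma_f)(z_0)$ has full rank for every $z_0 \in \gamma_f^{-1}(q)$. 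By Proposition~\ref{thm:3}, the fiber $\gamma_f^{-1}(q)$ coincides with $\operatorname{Crit}(\varphi_q)$, so the two sets of points being compared are literally the same.

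The core step is then the observation that, since $D$ is invertible, $\operatorname{Hess}(\varphi_q)(z_0) = D\cdot\operatorname{Jac}(\gamma_f)(z_0)$ is invertible if and only if $\operatorname{Jac}(\gamma_f)(z_0)$ is invertible. Hence $z_0$ is a Morse point of $\varphi_q$ iff $z_0$ is a regular point of $\gamma_f$. Quantifying over all $z_0$ in the common fiber $\operatorname{Crit}(\varphi_q) = \gamma_f^{-1}(q)$ yields: $\varphi_q$ has only Morse critical points $\iff$ $\gamma_f$ has no critical point over $q$ $\iff$ $q\notin\mathcal D$, which is the claim. One should also note that the hypothesis $q\in\mathbb Z^n-\{0\}$ (or more generally $q\in\mathbb P^{n-1}$) is exactly what is needed for $\varphi_q=\Phi(q,-)$ to be a well-defined function on $V^*$, and that the local computation is carried out in the chart $f_{z_n}\ne 0$; since $V^*(f)$ is smooth, such charts cover $V^*$, and the argument is chart-independent because invertibility of a square matrix is a coordinate-free notion.

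The only genuinely delicate point is bookkeeping about \emph{which} fiber and \emph{which} points one ranges over, together with the translation between ``unramified'' and ``empty critical fiber.'' Concretely one must be careful that $\overline\gamma_f$ is a finite map to the smooth $\mathbb P^{n-1}$ (so that the discriminant is well-defined as in the introduction), that for $q$ in the torus part of $\mathbb P^{n-1}$ the critical points of $\overline\gamma_f$ lying over $q$ are all in $V^*(f)$ rather than on the boundary divisors of $\mathbb X_\Sigma$ — this uses the non-singularity of $f$ for its Newton polyhedron — and that ``Morse'' for a function on the smooth variety $V^*$ means non-degeneracy of the Hessian in \emph{any} local chart. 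Once these identifications are in place, the proof is a one-line consequence of Proposition~\ref{thm:4}. I would therefore write the proof as: invoke Proposition~\ref{thm:3} to equate $\operatorname{Crit}(\varphi_q)$ with $\gamma_f^{-1}(q)$, invoke Proposition~\ref{thm:4} to get $\operatorname{Hess}(\varphi_q)(z_0)=D\cdot\operatorname{Jac}(\gamma_f)(z_0)$ with $D$ invertible, and conclude that non-degeneracy of the Hessian at every point of the fiber is equivalent to $\gamma_f$ being unramified over $q$.
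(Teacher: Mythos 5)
Your proof is correct and follows essentially the same route as the paper: identify $\operatorname{Crit}(\varphi_q)$ with $\gamma_f^{-1}(q)$ via Proposition~\ref{thm:3}, then use the Hessian--Jacobian identity of Proposition~\ref{thm:4} (equivalently, the determinant formula $\det\operatorname{Hess}(\varphi_q)(z_0)=\tfrac{q_n^{n-1}}{z_{0,1}\cdots z_{0,n-1}}\det\operatorname{Jac}(\gamma_f)(z_0)$) to conclude that the Hessian is non-degenerate at every fiber point iff $\gamma_f$ is unramified over $q$. The extra bookkeeping you add (charts, boundary divisors, chart-independence of non-degeneracy) is harmless and matches what the paper leaves implicit.
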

\begin{proof}
The map $\gamma_f$ is not ramified over $y$ iff its Jacobian has full rank at all points of the fiber 
$\gamma^{-1}(y)$.
From Proposition~3 we get
	$$
		\det(\frac{\partial^2 \varphi_q}{\partial z_i\partial z_j}(z_0))
		=\frac{q_n^{n-1}}{z_{0,1}\cdots z_{0,{n-1}}}\det(Jac(\gamma_f)(z_0)).
	$$
Hence, the Jacobian determinant does not vanish iff the Hessian is not zero at corresponding points.	
\end{proof}
Next we want to discuss degenerated critical points of the phase function. 
By Mather-Yao type theorem the ${\mathcal R}$-class (right-equivalence)
of an analytic function $h(z)\in \mathbb{C}\{z\}=:{\mathcal O}_n$ at an isolated critical point $z_0=0$ 
is equivalent to the isomorphy type of the Milnor algebra $Q_h:={\mathcal O}_n/(\partial h/\partial z)$, 
but as $\mathbb{C}[t]$-algebra, the action of $t$ on $Q_h$ induced by multiplication with $h$, cf. \cite{BM}. 
The isomorphy type of the associated singularity $(V(h),z_0)$, i.e. the ${\mathcal K}$-class (contact-equivalence) of $h(z)$, is equivalent to the isomorphy class of the Tjurina algebra $T_h=Q_h/(h)$ itself, cf. \cite{Yau}.  
Obviously, these equivalence classes coincide for quasi-homogeneous functions 
(because $\mu(h)=\tau(h)$, $T_h=Q_h$, $hQ=0$).
The Milnor algebra of the phase function at $z_0$ coincides with the local algebra of the fiber  
$\gamma_f^{-1}(\gamma_f(z_0))$ at $z_0$.
\begin{cor}
 If $(z_0,y_0)\in \Gamma_{\gamma_f}$, denote by  
 $Q_\varphi$ the Milnor algebra of the function $(\varphi_{y_0}(z)-y_0)$ at $z_0$, then we have
 $$Q_\varphi={\mathcal O}_{\gamma_f^{-1}(y_0),z_0} \ \ \mbox{and}\ \
 Q_\varphi/Ann(\mathbf{m}_Q) = {\mathcal O}_{Sing(\gamma_f^{-1}(y_0)),z_0}.$$
\end{cor}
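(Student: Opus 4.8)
The plan is to deduce both identities from Proposition~\ref{thm:3} together with two classical facts about the Milnor algebra of an isolated critical point: that it is a $0$-dimensional complete intersection, hence an Artinian Gorenstein local ring whose socle $Ann(\mathbf{m}_Q)$ is one-dimensional over $\CC$, and that this socle is generated by the class of the Hessian determinant. Under the standing assumption that $f$ is non-singular for its Newton polyhedron the fibre $\gamma_f^{-1}(y_0)$ is $0$-dimensional (cf.\ Proposition~\ref{lemma:1}), and by Proposition~\ref{thm:3} the critical points of $\varphi_{y_0}$ are precisely the points of that fibre; hence $z_0$ is an isolated critical point and $Q_\varphi$ is finite-dimensional over $\CC$. (The term $-y_0$ in $(\varphi_{y_0}(z)-y_0)$ is read as subtracting the critical value $\varphi_{y_0}(z_0)$; since only the Jacobian ideal of $\varphi_{y_0}$ enters, the constant shift is irrelevant.)

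For the first identity, assume w.l.o.g. $f_{z_n}(z_0)\neq 0$ and take, as in the proof of Proposition~\ref{thm:3}, local coordinates $z'=(z_1,\dots,z_{n-1})$ on the smooth hypersurface $V^*$ near $z_0$ and the function $g(z')$ with $f(z',g(z'))\equiv 0$, so that ${\mathcal O}_{V^*,z_0}\cong\CC\{z'\}$. That proof shows that, restricted to $V^*$, the generators $h_1,\dots,h_{n-1}$ of the ideal $I_{y_0}$ of~(\ref{eq:b}) coincide up to the units $z_i$ with the partials $\partial\varphi_{y_0}/\partial z_i$. Hence the ideal of $\gamma_f^{-1}(y_0)$ inside $\CC\{z'\}$ is the Jacobian ideal $(\partial\varphi_{y_0}/\partial z_1,\dots,\partial\varphi_{y_0}/\partial z_{n-1})$, which gives $Q_\varphi={\mathcal O}_{\gamma_f^{-1}(y_0),z_0}$. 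As this fibre is $0$-dimensional, the $n-1$ partials form a regular sequence in the $(n-1)$-dimensional regular local ring $\CC\{z'\}$, so $Q_\varphi$ is a $0$-dimensional complete intersection, in particular Artinian Gorenstein with one-dimensional socle.

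For the second identity I would equip $Sing(\gamma_f^{-1}(y_0))$ with the Fitting-ideal (Jacobian) scheme structure --- the natural choice here, compatible with the use of Fitting ideals for the discriminant above, and the only one making the statement meaningful, since the underlying set of the singular locus of a $0$-dimensional scheme carries no information. Write $X=\gamma_f^{-1}(y_0)$, locally $\mathrm{Spec}\,Q_\varphi$ with $Q_\varphi=\CC\{z'\}/(g_1,\dots,g_{n-1})$, $g_i=\partial\varphi_{y_0}/\partial z_i$. Since the $g_i$ form a regular sequence the conormal module $I/I^2$ is free of rank $n-1$ over ${\mathcal O}_X$, and the conormal sequence becomes a presentation
\[
Q_\varphi^{\,n-1}\xrightarrow{\,\textup{Hess}(\varphi_{y_0})\,}Q_\varphi^{\,n-1}\longrightarrow\Omega_{X}\longrightarrow 0 ,
\]
with $\textup{Hess}(\varphi_{y_0})=(\partial^2\varphi_{y_0}/\partial z_i\partial z_j)$. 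Thus $\textup{Fitt}_0(\Omega_X)$ is generated by the class of $\det\textup{Hess}(\varphi_{y_0})$ in $Q_\varphi$, and $X$ is smooth --- a reduced point --- at $z_0$ exactly when that determinant is non-zero at $z_0$, i.e.\ when $z_0$ is a Morse point, consistently with the preceding Corollary; therefore ${\mathcal O}_{Sing(X),z_0}=Q_\varphi/(\overline{\det\textup{Hess}(\varphi_{y_0})})$. The argument closes by the classical fact that in the Milnor algebra of an isolated critical point the class of the Hessian determinant is a non-zero generator of the one-dimensional socle, so that $(\overline{\det\textup{Hess}(\varphi_{y_0})})=Ann(\mathbf{m}_Q)$, and hence ${\mathcal O}_{Sing(\gamma_f^{-1}(y_0)),z_0}=Q_\varphi/Ann(\mathbf{m}_Q)$.

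The heart of the matter is that last fact, and it is where I would lean on external input. The cleanest justification is the nondegeneracy of the Grothendieck residue pairing on the Artinian Gorenstein algebra $Q_\varphi$: under it the socle corresponds to the canonical generator of the $\CC$-dual, while the Hessian class is annihilated by $\mathbf{m}_Q$ and pairs with $1$ to $\mu(\varphi_{y_0})\neq 0$, so it must generate the socle. A more elementary route uses the splitting lemma --- $\textup{Hess}$ is multiplicative under direct sums and $Q_\varphi\cong Q_\psi$ for the residual part $\psi$ --- reducing everything to the corank, and in corank one to the one-variable computation $\varphi_{y_0}\sim_{\mathcal R}z^{k+1}$, $Q_\varphi\cong\CC\{z\}/(z^k)$, socle $(\overline{z^{k-1}})$ generated by $\overline{\varphi_{y_0}''}$; the general corank case still rests on Artinian Gorenstein duality. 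I would invoke this as standard rather than reprove it. The only remaining point is expository: to state explicitly which scheme structure the symbol $Sing(-)$ carries.
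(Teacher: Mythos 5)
Your proof is correct and follows the same essential logic as the paper's: use Proposition~\ref{thm:3} to identify the fibre ideal with the Jacobian ideal of $\varphi_{y_0}$ (giving the first identity), give $Sing(\gamma_f^{-1}(y_0))$ the Fitting-ideal scheme structure, and then invoke the classical fact that the Hessian determinant generates the socle $Ann(\mathbf{m}_Q)$ of the Gorenstein complete-intersection algebra $Q_\varphi$. Where you diverge is in how you reach the Hessian: the paper defines $Sing$ by the Jacobi determinant of $\gamma_f$ and then cites Proposition~\ref{thm:4} to convert that determinant into the Hessian of $\varphi_{y_0}$ up to a unit, whereas you bypass Proposition~\ref{thm:4} and read the Hessian off directly as the presentation matrix of $\Omega_X$ in the conormal sequence coming from the regular sequence $(\partial\varphi_{y_0}/\partial z_i)$. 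The two computations are of course equivalent --- the generating systems of the fibre ideal differ by multiplication with an invertible matrix, so the Fitting ideal is the same --- but your version makes the scheme structure on $Sing$ explicit, which the paper leaves tacit, and it supplies a justification (via Grothendieck residue/Gorenstein duality, or the splitting lemma) for the Hessian-generates-socle step that the paper simply asserts. Both additions are legitimate clarifications rather than new ideas; the paper's use of Proposition~\ref{thm:4} has the small advantage of tying the corollary more visibly to the chain of results already established.
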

\begin{proof}
By Proposition 3 
the germs coincide: 
$(Crit(\varphi_{y_0}),z_0) = (\gamma_f^{-1}(y_0),z_0)$. The algebra of the critical
locus is the Milnor algebra of  $(\varphi_{y_0}(z)-y_0)$. By Proposition 4 
the Jacobi determinant of
$\gamma_f$ at $z_0$ equals up to a constant multiple to the Hessian of $\varphi_{y_0}$ at $z_0$, 
which generates the 
annulator of the maximal ideal in the local complete intersection algebra $Q_\varphi$.
\end{proof} 
A function $h\in {\mathcal O}_n$ with isolated critical point is called {\em almost quasihomogeneous}, if $\mu=\tau+1$.
This is equivalent to $hQ_h=Ann({\bf m}_Q)$.
Assume that the singularities in a fiber of a phase function are quasihomogeneous or almost quasihomogeneous,
then in spite of Mather-Yao type theorems these singularities are determined by the fiber germs of the 
logarithmic Gauss map, because $Q_\varphi=T_\varphi$ or $Q_\varphi/(Ann(\bf{m}))=T_\varphi$, respectively.   
Note, that all simple or unimodal critical points belong to these singularities.
The singularities of a phase function on their part determine the 
asymptotic of corresponding oscillating intergrals.  

All degenerated critical points are lying over the singularities of the 
discriminant ${\mathcal D}\subset
\mathbb{P}^{n-1}$ of $\gamma_f$. Many results could be found concerning the 
connection between singularites of discrimant and singularities in the fiber. 
We try to discuss some 
consequnces with respect to our setting. 
 
The finite map $\overline{\gamma}_f$ can be considered as family over $\mathbb{P}^{n-1}$ 
of complete
intersections (of relative dimension zero). Let $(X_0,0)$ be a germ of an 
isolated complete intersection singularity, let 
 $X\rightarrow S$ its versal family with
discriminant $D\subset S$, the singularity of the discriminat $(D,0)$ determines 
the special fiber
$(X_0,0)$ up to isomorphy by a result of Wirthmuller, c.f \cite{Wirt}. If $\dim (X_0)=0$ 
the multiplicity of the discriminant fulfills 
$\textup{mult}(D,0)=\dim_\mathbb{C}({\mathcal O}_{X_0})-1=\dim_\mathbb{C}({\mathcal O}_{Sing(X_0)})$, 
as a consequnce of \cite{Le}, for instance. 
This is globalized straight forward.
\begin{pro}
Let $\gamma:X\rightarrow S$ be a finite morphism with discriminant $D\subset S$ and
each $X_s$ is a complete intersection, then holds:
$$\textup{mult}_s(D)\geq \sum_{z_i\in X_s}\textup{mult}(Sing(X_s),z_i)=\sum_{z_i\in X_s}(\textup{mult}(X_s,z_i)-1).$$ 
Moreover, equality holds at $s\in S$, if $\gamma$
induces a versal deformation
of $X_s$. 
\end{pro}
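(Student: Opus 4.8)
The plan is to reduce the global estimate to the quoted local statement of L\^e, by localising the discriminant at $s$ and splitting it into the contributions of the individual points $z_i\in X_s$.

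First I would fix $s$ and, using finiteness of $\gamma$, choose a neighbourhood $U$ of $s$ with $\gamma^{-1}(U)=\coprod_i V_i$, $z_i\in V_i$. Denote by $\mathcal C=\mathcal C_\gamma$ the relative critical scheme, cut out by the $0$-th Fitting ideal of $\Omega_{X/S}$; since differentials commute with base change, in each fibre this is the Jacobian ideal, so the germ of $\mathcal C$ along $X_s$ at $z_i$ has local ring $\mathcal O_{Sing(X_s),z_i}$, and $(\gamma_*\mathcal O_{\mathcal C})_s$ decomposes as a direct sum $\bigoplus_i\mathcal O_{\mathcal C,z_i}$ of $\mathcal O_{S,s}$-modules. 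Because the $0$-th Fitting ideal is multiplicative on direct sums, the ideal defining $D$ at $s$ is the product of the ideals $\textup{Fitt}_0^{\mathcal O_{S,s}}(\mathcal O_{\mathcal C,z_i})$; write $D_i$ for the associated divisor germ. Since $S$ is smooth, the order of a function is additive on products, so once each $D_i$ is known to be a hypersurface we get $\textup{mult}_s(D)=\sum_i\textup{mult}_s(D_i)$. Thus it suffices to prove, for each $i$, that $D_i$ is a hypersurface with $\textup{mult}_s(D_i)\ge\textup{mult}(Sing(X_s),z_i)$; the remaining identity $\textup{mult}(Sing(X_s),z_i)=\textup{mult}(X_s,z_i)-1$ is part of L\^e's formula quoted above (equivalently, the Jacobian determinant generates the one-dimensional socle of the Artinian complete intersection algebra $\mathcal O_{X_s,z_i}$).

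Fix $i$. Since the fibres of $\gamma$ are zero-dimensional complete intersections, the germ $\gamma\colon(X,z_i)\to(S,s)$ is a flat deformation of the isolated complete intersection singularity $(X_s,z_i)$; let $\tilde\gamma\colon(\tilde X_i,z_i)\to(\tilde S_i,0)$ be a versal such deformation and $\varphi_i\colon(S,s)\to(\tilde S_i,0)$ a classifying morphism, so that $(X,z_i)$ is the pull-back of $\tilde X_i$ along $\varphi_i$. Both the relative critical scheme and---by Teissier's construction \cite{Tes}---the discriminant commute with this base change, hence the versal discriminant $\tilde D_i\subset\tilde S_i$ pulls back to $D_i$: choosing a generator $\tilde h_i$ of the (principal) ideal of $\tilde D_i$, the ideal of $D_i$ at $s$ equals $(\varphi_i^{*}\tilde h_i)$, so $D_i$ is indeed a hypersurface. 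By L\^e's formula $\textup{mult}_0(\tilde D_i)=\dim_{\mathbb C}\mathcal O_{Sing(X_s),z_i}=\textup{mult}(Sing(X_s),z_i)$, and since $\varphi_i$ carries the maximal ideal into the maximal ideal, $\textup{mult}_s(D_i)=\textup{ord}_s(\varphi_i^{*}\tilde h_i)\ge\textup{ord}_0(\tilde h_i)=\textup{mult}(Sing(X_s),z_i)$. Summing over $i$ gives the inequality of the Proposition.

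For the equality, assume $\gamma$ induces a versal deformation of $X_s=\coprod_i(X_s,z_i)$. Then the classifying map into $\prod_i(\tilde S_i,0)$ is a submersion, hence so is each $\varphi_i$; and pull-back along a submersion---a coordinate projection in suitable coordinates---preserves the order of a function, so $\textup{mult}_s(D_i)=\textup{ord}_s(\varphi_i^{*}\tilde h_i)=\textup{ord}_0(\tilde h_i)=\textup{mult}(Sing(X_s),z_i)$ for every $i$, and the inequality above becomes an equality. The point I expect to require the most care is making the identification $D_i=\varphi_i^{*}\tilde D_i$ precise as divisors rather than merely set-theoretically on supports---this is exactly what Teissier's Fitting-ideal structure on the discriminant buys us---together with the bookkeeping that turns the local product decomposition of $\gamma_*\mathcal O_{\mathcal C}$ into additivity of discriminant multiplicities.
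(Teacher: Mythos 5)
Your proof is correct and follows essentially the same two-step strategy as the paper's: decompose the discriminant germ at $s$ into local contributions $D_i$ from the points $z_i\in X_s$ (so multiplicities add), then pull each $D_i$ back from a versal discriminant $\tilde D_i$ via a classifying map, noting that multiplicity cannot drop under base change and is preserved when the classifying map is a submersion. You have merely spelled out what the paper leaves implicit, in particular the Fitting-ideal multiplicativity behind the additivity of $\textup{mult}_s(D_i)$ and the submersion argument for the equality case.
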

\begin{proof}
The local branches of $D$ at $s$ are corresponding to the discriminants $D_i$ of the germs $(X,z_i)
\rightarrow (D,s)$, hence the multiplicities of $D_i$ add up to the multiplicity of $D$. Any  
family  is locally induced from a versal one, hence the discriminant is induced by base chance from 
the discriminant of the versal family and its multiplicity cannot become smaller.
\end{proof}
Note, versality is an open property and corresponds to some kind of {\em stability} in the sence of Mather.
It is not clear for us, whether (or under which additional assumtions) 
the logarithmic Gauss map $\gamma_f$ for a generic function $f(z)$ with fixed Newton polyhedron ${\mathcal N}$
has this stability property. It holds in all computed examples. But, an answer needs further investigation.

Inspecting the classification of hypersurface singularities we get the types of possible critical points
for small multiplicities of the discriminant, which are listed in the following Corollary.
\begin{cor}
Given a Laurent polynomial $f(z)$, non-singular with respect to its Newton polyhedron, and let $\gamma$ be the 
corresponding logarithmic Gauss map with discriminant $D\subset \mathbb{P}^{n-1}$.
Let $m=m(q):=\textup{mult}(D,q)$, then the following configurations are met for the fiber $F_q:=\gamma_f^{-1}(q)$, respectively 
for the collection of critical points of the phase function $\varphi_q(z)$:
\begin{itemize}
\item $m=1$: $F_q$ has exactly one point $z_*$ of multiplicity 2,
$\varphi_q$ has non-degenerated critical point and one 
$A_2$-singularity at $z_*$. 
\item $m=2$: $F_q$ either one point of multiplicity 3 or at most two points of multiplicity 2, 
$\varphi_q$ has at most one $A_3$ or two $A_2$-points.
\item $m=3$: Besides $A_1$ can occur the following collections of critical points of $\varphi_q$:
one $D_4$ or one $A_4$ or a combination $k_2A_2+k_3A_3$ with
$k_2+2k_3\leq 3$.
\item $m\leq 6$: Type of critical set of $\varphi_q$: Only (simple) ADE-critical points can occur 
$$\sum_{i\geq 1} k_iA_i+\sum_{i \geq 4} l_iD_i+\sum_{i=6}^8 n_iE_i,$$ 
such that
 $$\sum i(k_i+l_i +n_i) \leq n!\,vol({\mathcal N})$$ and 
 $$\sum (i-1)(k_i+l_i+n_i)\leq m.$$
\item $m \leq 6$: all critical points are quasihomogeneous (and simple or unimodal).
\item $m \leq 14$: all critical points are almost quasihomogeneous (and simple or unimodal).
\end{itemize}
\end{cor}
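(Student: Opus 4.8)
The plan is to derive all six items from four facts already in hand: the identification of the Milnor algebra $Q_\varphi$ of the phase function at a critical point with the local ring of the fibre (the Corollary stating $Q_\varphi=\mathcal{O}_{\gamma_f^{-1}(y_0),z_0}$), the multiplicity inequality of Proposition~5, the degree formula $\deg(\overline{\gamma}_f)=n!\,\textup{Vol}(\mathcal{N})$ of Proposition~1, and the classification of hypersurface singularities of small Milnor number. Fix $q\in\mathbb{P}^{n-1}$ with $m=\textup{mult}(D,q)$. Since $\overline{\gamma}_f$ is finite, $F_q=\gamma_f^{-1}(q)$ is zero-dimensional, so each $\mathcal{O}_{F_q,z_0}=Q_\varphi$ is Artinian; hence every point $z_0\in F_q$ is an \emph{isolated} critical point of $\varphi_q$ (by Proposition~3 the set $F_q$ is exactly $\textup{Crit}(\varphi_q)$) with Milnor number
$$\mu(\varphi_q,z_0)=\dim_{\mathbb{C}}Q_\varphi=\dim_{\mathbb{C}}\mathcal{O}_{F_q,z_0}=\textup{mult}(F_q,z_0).$$
By Proposition~4 the Hesse matrix of $\varphi_q$ at $z_0$ and the Jacobi matrix of $\gamma_f$ at $z_0$ differ by an invertible diagonal factor, so $z_0$ is a degenerate critical point ($\mu\ge2$) exactly when $z_0$ lies on the critical locus of $\gamma_f$; in particular $q\in D$ forces the existence of at least one degenerate critical point.

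Next come the two numerical bounds. Applying the inequality of Proposition~5 to $\overline{\gamma}_f$ (the versality hypothesis is not needed for the inequality), and discarding the non-negative contribution of any ramification of $\overline{\gamma}_f$ along $\mathbb{X}_\Sigma\setminus\mathbb{T}^n$, gives
$$m\ \ge\ \sum_{z_0\in F_q}\bigl(\textup{mult}(F_q,z_0)-1\bigr)=\sum_{z_0\in F_q}\bigl(\mu(\varphi_q,z_0)-1\bigr).$$
On the other hand $F_q$, being the torus part of a fibre of $\overline{\gamma}_f$, has total length at most $\deg(\overline{\gamma}_f)=n!\,\textup{Vol}(\mathcal{N})$, i.e. $\sum_{z_0\in F_q}\mu(\varphi_q,z_0)\le n!\,\textup{Vol}(\mathcal{N})$. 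Writing $k_i,l_i,n_i$ for the numbers of critical points of types $A_i$, $D_i$, $E_i$ and using $\mu(A_i)=\mu(D_i)=\mu(E_i)=i$, these two inequalities become exactly the two displayed relations $\sum(i-1)(k_i+l_i+n_i)\le m$ and $\sum i(k_i+l_i+n_i)\le n!\,\textup{Vol}(\mathcal{N})$ of the last bullet.

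The list of configurations for small $m$ is then combinatorics fed by the classification. Each degenerate critical point has $\mu(\varphi_q,z_0)-1\le m$, and the only singularities with $\mu=2,3,4$ are $A_2$, $A_3$, and $A_4$ or $D_4$ respectively (for $\mu\le 3$ the corank is forced to be $\le 1$, since a corank-$2$ germ has $\mu\ge 4$). For $m=1$, each degenerate point contributes at least $1$ to $\sum(\mu-1)\le 1$ and at least one exists, so there is exactly one, of Milnor number $2$, i.e. a single $A_2$-point, which is the unique point of $F_q$ of multiplicity $2$. For $m=2$ and $m=3$ one lists the non-empty multisets $\{\mu_i\}$ of integers $\ge 2$ with $\sum(\mu_i-1)\le m$ and translates via the classification; this yields exactly the stated collections (for $m=3$: a single $D_4$ or $A_4$, or $k_2A_2+k_3A_3$ with $k_2+2k_3=\sum(\mu_i-1)\le 3$). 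For $m\le 6$ every degenerate point has $\mu\le 7$, and since the smallest non-simple singularity $\widetilde{E}_6=P_8$ has $\mu=8$, all critical points of $\varphi_q$ are simple (type $A$, $D$ or $E$); this, together with the two inequalities above, is the fourth bullet, and since the normal forms of the simple singularities are quasihomogeneous polynomials ($\mu=\tau$ for them) it gives the fifth.

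For the last item the classification is pushed one step further: every isolated hypersurface singularity with $\mu\le 15$ is simple, parabolic ($P_8,X_9,J_{10}$), hyperbolic ($T_{p,q,r}$ with $p+q+r-1\le 15$), one of the fourteen exceptional unimodal families ($E_{12},\dots,U_{12}$), or one of the few bimodal germs of Milnor number at most $15$; of these, all are quasihomogeneous ($\mu=\tau$) except the hyperbolic $T_{p,q,r}$, for which $\mu=\tau+1$. Hence if $m\le 14$, every degenerate critical point has $\mu\le 15$, so $\mu-\tau\le 1$, i.e. it is quasihomogeneous or almost quasihomogeneous (and simple or unimodal), which is the sixth bullet. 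I expect this last paragraph to carry the real work: one has to run through the complete Arnold list up to $\mu=15$ and check the Tjurina defects — $\mu-\tau=1$ for the hyperbolic $T_{p,q,r}$, $\mu=\tau$ for the parabolic and the fourteen exceptional families, and $\mu-\tau\le 1$ for whatever bimodal germs appear below the threshold — verifying that no singularity with $\mu-\tau\ge 2$ occurs with $\mu\le 15$, so that $14$ is really the correct bound. A minor caveat, which I would settle by a remark rather than an argument, is that the precise counts in the first bullets ("exactly one", "at most two") presuppose that the ramification of $\overline{\gamma}_f$ over $q$ lies in the torus; if part of it degenerates on $\mathbb{X}_\Sigma\setminus\mathbb{T}^n$ the displayed inequality still holds, but the counts then refer to $F_q$ augmented by those boundary points.
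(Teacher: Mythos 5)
Your proposal is correct and follows the approach the paper only sketches: the paper gives no proof beyond the one-line remark that the corollary follows by ``inspecting the classification of hypersurface singularities,'' together with the remark after the corollary that the first not almost-quasihomogeneous critical points are the bimodal exceptional germs $Q_{16}$, $U_{16}$ with $\mu=16$. You reconstruct exactly the intended argument: identify $\mu(\varphi_q,z_0)$ with the local multiplicity of the fibre via the Milnor-algebra Corollary and Propositions~3--4, plug into the multiplicity inequality of Proposition~5 and the degree bound of Proposition~1 to get the two displayed inequalities, and then run through Arnold's lists. The caveat you raise about ramification of $\overline{\gamma}_f$ along the boundary divisors of $\mathbb{X}_\Sigma$ is genuine (the ``exactly one'' in the $m=1$ bullet presupposes both that the degenerate point lies in the torus and that the inequality of Proposition~5 is sharp), and the paper also leaves it implicit; flagging it as you do is appropriate.
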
  
The first critical point, which is not almost quasihomogeneous are the bimodal exceptional 
singularities with smallest Milnor number $\mu=16$ of type $Q_{16}$ or $U_{16}$, cf.\cite{AVGZ}.
 They can occur only at multiplicity $m\geq 15$.

\section{Representation of Diagonal Coefficient by Oscillating Integrals and its Phase Function}

In this section we return to Laurent series~(\ref{eq:1}) converging in $\Log^{-1}(E_\nu)$.
We explane the residue asymptotics formula for its diagonal 
coefficient in the direction $q\in  \mathbb{Z}^n\cap K_\nu$.

Recall, that the Laurent series coefficient can be represented in the form
$$
c_\alpha^\nu=\frac{1}{(2\pi\imath)^n}\int_{\Gamma_\nu}\,\frac{\omega}{z^{\alpha+\bf{1}}},
$$
where $\omega:=F(z)dz$ and the cycle $\Gamma_\nu$ is $n$-dimensional real torus 
$\textup{Log}^{-1}(x_\nu), \ x_\nu\in E_\nu$.  
The direction $q$  induces series of diagonal coefficients
\begin{equation}
\label{eq:2a}
c_{q\cdot k}^\nu=\frac{1}{(2\pi\imath)^n}\int_{\Gamma_\nu}\,\frac{\omega}{z^{q\cdot k+\bf{1}}}.
\end{equation}
We may assume that the point $x_\nu$ generates a line $L:=\mathbb{R}x_\nu\subset\mathbb{R}^n$, 
which is transversal
to the boundary $\partial E_\nu$  and intersects it at a point $p$ and the normal vector at $p$
to $\partial E_\nu$ coincides with the vector~$q$. 
In other words, $p$ is the $\Log$-image of  points
$w^{(1)}(q),\ldots, w^{(N)}(q)$ from the fiber $\gamma^{-1}(q)$ of the logarithmic Gauss mapping.
The torus
$\Log^{-1}(p)\subset\Log^{-1}(L)$ intersects the hypersurface $V^*$ 
at most in $N\leq\textup{Vol}(\mathcal{N}_f)\cdot n!$ points.

Consider a heighbourhood $U_i$ in $\mathbb{C}^n$ of the point $w^{(i)}(q)$, 
then $\Log^{-1}(L)$ intersects the hypersurface $V^*$ in $U_i$ along an 
$(n-1)$-dimensional chain $h_i\subset V^*$.
It can be shown, cf.~\cite{Ts2} for the case $n=2$, that integral~(\ref{eq:2a})
is asymptotically equivalent for $k\to +\infty$ to the sum 
\begin{equation}
	\label{eq:4}
		c_{q\cdot k}^{\nu}=
		\frac{1}{(2\pi\imath)^{n-1}}\sum_{i=1}^{N}
		\int_{h_i}\,\textup{res}\,\left(\frac{\omega}{z}\right)\cdot e^{-\left<q,\log z\right>\cdot k},
\end{equation}
where $\log z=(\log z_1,\ldots, \log z_n)$, $\textup{res}\, (\omega/z)$ is the residue form.
In local coordinates $z'=(z_1,\ldots, z_{n-1})$ of $V^*$ (assuming $f_{z_n}\neq 0$) we have
$\,\textup{res}\,(\frac{\omega}{z}) =\frac{g dz'}{z'  \left. f_{z_n} \right|_{V^{*}}}$. 
Therefore, the diagonal coefficient can be represented as the sum of oscillating integrals with the
 {\em phase function} $\varphi_q(z')=\left.\left<q, \log z\right>\right|_{V^*}$. 
The critical points of this phase function give the main contribution 
to the asymptotic of such integrals. From Proposition~3 follows, that the support
of $h_i$ contains only one critical point of $\varphi_q,$ it is a point $w^{(i)}(q)\in \gamma^{-1}(q)$.

The asymptotics of an oscillating integral is most simple for  Morse critical points. 
In this case it is given by stationary phase method (also called saddle-point method, see~\cite{Wong}).
The Corollary~1 of Proposition~4 states, that for directions $y=q$ outside the ramification locus of
the logarithmic Gauss map $\gamma$ the phase function $\varphi_q$ has only Morse critical points.

The situation of a degenerated critical point is much more complicated. 
First of all we are looking only for rational critical points!
By a result of Varchenko
some information about asymptotics of oscillating integral can be read from 
the distance of the Newton diagramm  of the phase function  at the corresponding point 
in case of a Newton non-degenerated phase function  (and then it depends only of the 
${\mathcal K}$-equivanence class of the hypersurface singularity). Otherwise, the distance is only a lower bound. 
So called {\em adapted coordinates} exist always in dimension 2, such that the phase function is 
Newton non-degenerated. Adapted coordinates can be computed algorithmically, for more details 
cf. \cite{Va} and \cite{IM}. 

\section{Discussion of examples}

\begin{example}

Consider the smooth hypersurface $V^*(f)$ defined as a zero set of the polynomial
$$f=z_1^2 z_2 +z_1 z_2^2 -z_1 z_2 +a,\ a\in\mathbb{R}, \ a\not=0,\frac{1}{27},$$
which is non-degenerated for its Newton polyhedron. The cubic $V^*(f)$ is 
a two-dimensional real torus with three removed points.

The solutions $z(y)=(z_1(y), z_2(y))$ of 
\begin{equation}
\label{eq:ex1}
\left\{
\begin{array}{r}
z_1^2 z_2 +z_1 z_2^2 -z_1 z_2 +a=0\\
h:=(2y_2 -y_1) z_1^2 z_2 +(y_2-2y_1) z_1z_2^2+(y_1-y_2)z_1 z_2=0\\
\end{array}
\right.
\end{equation}
for fixed parameter $(y_1:y_2)\in \mathbb{P}^1$ are zeroes of ideal~(\ref{eq:cv}) and for real
parameter $y$ they are 
projected to the contour $\mathcal{C}_{V^*}$
by $\textup{Log}$-map. We are interested in the real ramification locus of $\gamma_f$. 

We compute the resultant of $f,h$ with respect to the variable $z_2$ 
$$
\begin{array}{ll}
Res(f, h):= & (-y_1^2+y_1 y_2 +2y_2^2) z_1^3+(2y_1^2-2y_1y_2-y_2^2)z_1^2\\
\ &  +(-y_1^2+y_1 y_2)z_1+ 4a y_1^2  -4a y_1 y_2+a y_2^2.
\end{array}
$$
The multiplicity of  an isolated zero $z(y)$ of system~(\ref{eq:ex1}) coincides with the
multiplicity of the zero $z_1(y)$ in $Res(f,h)$. The discriminant  
of the polynomial $Res(f,h)$ with respect to $z_1$ 
is the homogeneous polynomial 
$$
\begin{array}{ll}
	D(y_1, y_2) &=(1-27a)(-2y_1+y_2)^2 ( 4ay_1^6-12a y_1^5 y_2 +(-3a+1)y_1^4y_2^2- \\
	\ &  -2(1-13a)y_1^3 y_2^3 + (-3a+1) y_1^2 y_2^4 -12a y_1 y_2^5+ 4a y_2^6)
\end{array}
$$
in variables $y_1,y_2$. 

Interested in roots of (\ref{eq:ex1}) in $\mathbb{T}^2$ we can omit the factor $(-2y_1+y_2)^2$ in the last expression.
Substituting in $D(y)$  an affine parameter $\lambda=y_1/y_2$ we get the polynomial
$$
D(\lambda)= 4a\lambda^6-12a \lambda^5 +(-3a+1)\lambda^4 -2
	(1-13a)\lambda^3 + (-3a+1) \lambda^2 -12a\lambda+ 4a,
$$
whose real zeroes $\lambda_i$ give the  points $(\lambda_i:1)\in \mathbb{P}^1_{\mathbb{R}}$ 
of the real ramification locus of $\gamma_f$.

We have three real intervals of the paramter line $\mathbb{R}_a$:
for $a<0$ the polynomial $D(\lambda)$ has
six real roots, for $0<a<\frac{1}{27}$ and $\frac{1}{27}<a$
the polynomial $D(\lambda)$
has no real roots.

Choosing values of $a$ from the different intervals of $\mathbb{R}_a$ we obtain different configurations
of the contour $\mathcal{C}_{V^*}$ and the amoeba $\mathcal{A}_{V^*}$. Because the volume 
$2!\cdot\textup{vol}(\mathcal{N}_f)=3$
does not depend on $a$, all these configurations have a following common property:
the  number of preimages $\textup{Log}^{-1}(p)$ of a point $p\in \mathcal{C}_{V^*}$
with normal vector $(y_1, y_2)\in \mathbb{R}^2$ is equal to three, provided we count such preimages, which
are solutions to~(\ref{eq:ex1}) for corresponding $(y_1:y_2)\in \mathbb{P}^1_{\mathbb{R}}$,
with their multiplicity in~(\ref{eq:ex1}). Hence, one can find for every $\lambda\in \mathbb{R}$
three points on $\mathcal{C}_{V^*}$ with the normal vector $(\lambda, 1)$. Moreover, each one lies on its own
colored or black part of the contour (see. Fig.~1).

On the left Fig.~1 six black points on $\mathcal{C}_{V^*}$ are images of pleat singularities of the mapping 
$\left.\textup{Log}\right|_{V^*}$,
they correspond to values $\lambda_i$ that belong to the real ramification locus of $\gamma_f$.

Although, for $a>0$ the real ramification locus of $\gamma_f$ is empty, we can distinguish two situations.
If $0<a<1/27$ the hypersurface $V^*(f)$  is a complexification of the so-called Harnack curve 
and the complement of its amoeba has the maximal number 
of components. In this case $\left.\textup{Log} \right|_{V^*}$ has only fold singularities, 
which coincide with $V^*(f)\cap \mathbb{R}^2$. 
If $a>1/27$ the complement of the amoeba $\mathcal{A}_{V^*}$ has no bounded component, and the mapping 
$\left.\textup{Log}\right|_{V^*}$ has  three pleat singularities and other singularities are folds.

\begin{center}
\includegraphics{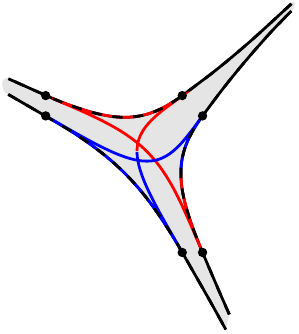}\hskip .25cm
\includegraphics{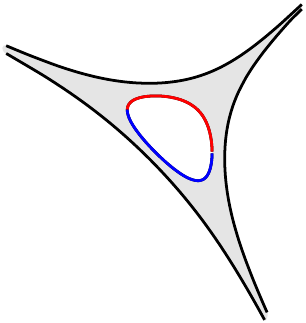}\hskip .25cm
\includegraphics{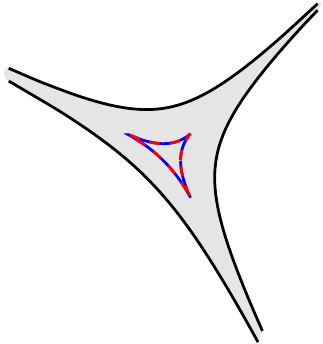}\\
\bigskip

{\small Fig.~1. The contour and the amoeba (shaded) for the polynomial $f=z_1^2 z_2 +z_1 z_2^2 -z_1 z_2 +a$:\\ on the left $a<0$, in the middle $0<a<1/27$,
	on the right $a>1/27$.}
\end{center}

Therefore, for $a>0$ $\gamma_f$-fiber of any rational  $\lambda$ contains only the Morse critical points of the phase function.
For example, set the parameter $a=3/100$ then the $\gamma_f$-fiber of $\lambda=1/3$ consists of the Morse points
$(3/10, 1/2)$, $(7/40+\sqrt{57}/40, 9/8-\sqrt{57}/8)$ and $(7/40-\sqrt{57}/40, 9/8 +\sqrt{57}/8)$. 
For $a<0$ we can get degenerated rational points in a real ramification locus, e.g. there are six rational points 
$-2, -1/2, 2/3, 3/2, 1/3,3$ in the real ramification locus of $\gamma_f,$ $a=-9/10.$ The  $\gamma_f$-fiber of such points
has a simple point and an $A_2$-point of the phase function.

\end{example}

\bigskip
\begin{example}
We consider the polynomial $f$ in $n=3$ variables and non-degenerated for its Newton polyhedron,
$$
	f=1+z_1+z_2+z_3+3z_1 z_2 +3 z_1 z_3 + 3 z_2 z_3+11 z_1 z_2 z_3.
$$
As in Example~1 the real ramification locus  of $\gamma_f$ is determined by  the 
following system
\begin{equation}
\label{eq:ex2}
\left\{
\begin{array}{r}
1+z_1+z_2+z_3+3z_1 z_2 +3 z_1 z_3 + 3 z_2 z_3+11 z_1 z_2 z_3  =0,\\
y_3 z_1 - y_1 z_3 + 3y_3 z_1 z_2 +(3y_3-3 y_1)z_1 z_3 -3y_1 z_2 z_3 \hskip .75cm \\
+(11y_3 -11 y_1) z_1 z_2 z_3 =0,\\
y_3 z_2 - y_2 z_3 + 3y_3 z_1 z_2 -3y_2 z_1 z_3+(-3y_2+3y_3)z_2 z_3 \hskip .75cm  \\
+(11y_3 -11 y_2) z_1 z_2 z_3 =0.\\
\end{array}
\right.
\end{equation}
With similar computations we obtain the discriminant $D(y)$ of the logarithmic Gauss map.

\begin{equation}
\label{eq:ex22}
	D(y):=y_1^4\cdot(y_2-y_3)^2\cdot (4y_1+5y_2+5y_3)^2\cdot d(y),
\end{equation}
here $d(y)$ is a homogeneous polynomial of degree~$12$, it consists of 91 terms. Its Newton's
polyhedron is a triangle with vertices $(12,0,0)$, $(0,12,0)$ and $(0,0,12)$.

We do not consider zeroes of the first three factors in~(\ref{eq:ex22}), because
they do not give us multiple roots of~(\ref{eq:ex2}) in the torus. 
The ramification locus of $\gamma_f$
is given by zero set of $d(y)$. Let $\lambda_1=y_1/y_3$, $\lambda_2=y_2/y_3	$ 
be coordinates in affine part of $\mathbb{P}^{2}_{\mathbb{R}}$,  where $y_3\neq 1$. 
Fig.~2 shows the zero set of $d(\lambda_1,\lambda_2,1)=d(y)/y_3^{12}$, which 
coincides with the affine part of the real ramification locus of $\gamma_f$.

\begin{center}
\includegraphics{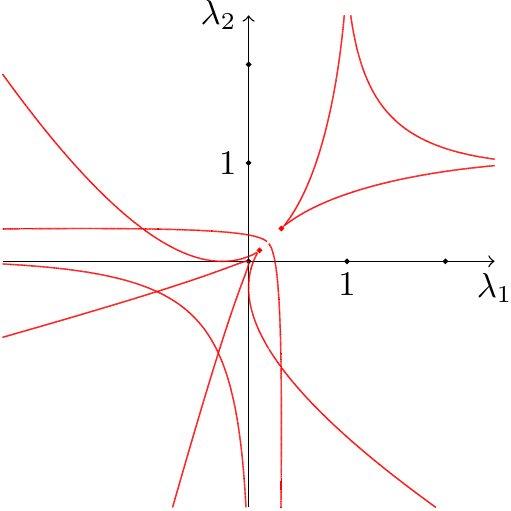}
\includegraphics{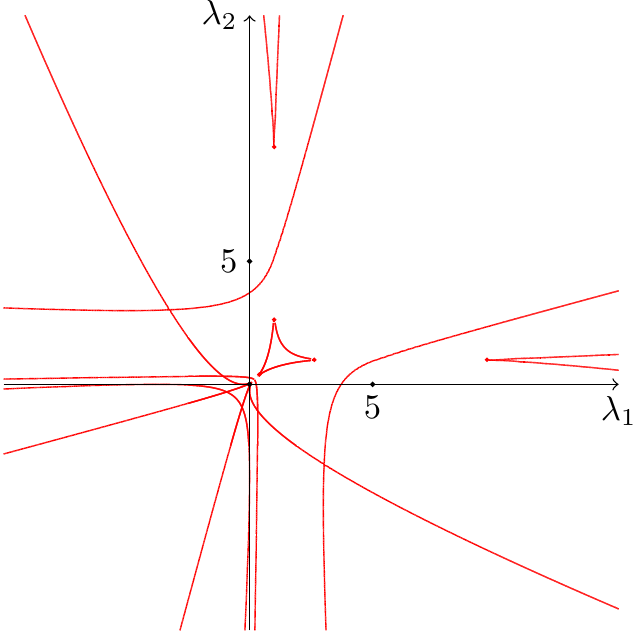}

\bigskip

{\small Fig.~2. The real ramification locus of $\gamma_f$.}
\end{center}

The red points $(1/9,1/9),\ (1/3,1/3),\ (1,3),\ (3,1),\ (1,9),\ (9,1)$ on Fig.~2 are  
the degerated rational critical points 
of the discriminant with Milnor number $\mu=2$. 
This example of the polynomial $f$ is a special one, 
because the existence of rational degenerated points in a real ramification locus is 
not a generic property. But, we are interested in such points, because they 
lead to  degenerated critical points of a phase function. 
In this example the $\gamma_f$-fiber of any $A_2$-point  
contains excately one $A_3$-critical point of the phase function (see Appendix for details).

\end{example}

\mbox{}\\[1mm]
{\Large \bf Appendix: \\ Computation with {\sc Singular} (some experiences)}\\[1mm]
The computer algebra system {\sc Singular}, cf. \cite{sing}, was used for the computation of 
examples. We tried several strategies for computing the discriminant of the Log-Gauss map with
different success, i.e. to get a result for non-trivial examples without 
overflow and in reasonable time. Here we give a small introduction how proceed in {\sc Singular},
demonstrated with the equation of example 2.

Start with a base ring that contains the ideal $I$ of the graph of the log Gauss map $\gamma_f$ of a
polynomial $f=f(z)$ and compute $I$, (here $n=3$):
\begin{quote}
{\tt ring R=0,(y1,y2,y3,z1,z2,z3),dp; \\
     poly f=1+z1+z2+z3+3*z1*z2 +3*z1*z3+3*z2*z3+11*z1*z2*z3;\\
     matrix A[2][3] = z1*diff(f,z1),z2*diff(f,z2),z3*diff(f,z3),y1,y2,y3;\\
     ideal I = f,minor(A,2);
}     
\end{quote}
Next we project the graph restricted to some affine chart
$U_3:=\{y_3\not=0\}$ into $\mathbb{A}^3:=U_3\times \mathbb{A}^1$ ($\mathbb{A}^1_1$ - a
coordinate axes of $\mathbb{A}^3_z$). The image is a hypersurface defined by a
polynomial $h(y_1,y_2,z_1)$, which we could closure in $\mathbb{P}^2_y$ 
by homogenizing in the $y's$. 
Using the elimination of variable, 
the multiplicities of multiple factors may be lost, but it does not effect 
the result.
\\
\begin{quote}
{\tt 
   I = subst(I,y3,1);
   ideal J = eliminate(I,z2*z3);\\
   poly h1 = J[1];  
}     
\end{quote}

The choice of the projection direction was good, if $deg_{z1}(h_1)=n!\,vol({\mathcal N})=6$.
The discriminant variety of $\gamma_f$ is contained in the discriminant hypersurface 
of the projection $V(h_1)\subset U_3\times \mathbb{A}^1_1\longrightarrow  U_3$,
computed in the next step. 
\\
\begin{quote}
{\tt 
   poly d1 = resultant(h1,diff(h1,z1),z1);\\
   d1 = homog(d1,y3);\\
   list Ld = factorize(d1);
}     
\end{quote}

The plane curve $V(d_1)\subset \mathbb{P}^2$ has several components, it may have components 
with certain multiplicities, some of them induced from the closure $V^*(f)$ or 
not belonging to the discriminant. If our polynomial is generic, then we expect the discriminant 
of $\gamma_f$ (i.e. restricted to the torus) being irreducible. We should test which factor is correct.
Some components of $V(d_1)$ have empty fiber with respect to $\gamma_f$ or no multiple points in its
$\gamma_f$-fibers.
We can reduce sometimes the number of factor as follows: Compute for any coordinate $z_i$ (as above for $i=1$)
polynomials $h_i$ and $d_i$ and factorize only  $d:=gcd(d_1,\ldots ,d_n)$. 
\\
Having found the equation of 
the discriminant polynomial $d_0(y)$, we can compute  its (discrete) singular locus.
\\
\begin{quote}
{\tt 
   poly d0 = Ld[1][2];\hspace{2cm}\mbox{\it (choose the right factor in this example)}\\
   d0  = subst(d0,y3,1); \\
   ring S = 0,(y1,y2,y3),dp;\\
   poly d0 = imap(R,d0);\\
   ideal sl = slocus(d0);\\
   list Lsl = primdecGTZ(sl);
 }     
\end{quote}  

Here, the singular locus has six rational double points $Q_1=(1,3)$, 
$Q_2=(\frac{1}{9},\frac{1}{9})$, $Q_3=(1,9)$, $Q_4=(\frac{1}{3},\frac{1}{3})$, 
$Q_5=(9,1)$, $Q_6=(3,1)$
  and more irrational singular points.
We choose $Q_1$ and check, that it is an $A_2$-singularity of $D$.
 \\
\begin{quote}
{\tt 
   show(Lsl[2]);\hspace{26mm}\mbox{\it (choose one of the singular points of $D$)}\\
   ring S' = 0,(y1,y2),ds;\\
   poly d0 = imap(R,d0);\\
   d0 = subst(d0,y1,y1+1); \hspace{7mm}\mbox{\it (translate that singularity to zero)}\\
   d0 = subst(d0,y2,y2+3);\\
   "mu =",milnor(d0);\hspace{2cm}\mbox{\it (Milnor number of the singularity)}\\
 }     
\end{quote}  

Compute the $\gamma_f$-fiber of $Q_1$. Its has 3 simple points and exactly one point 
$P_*=(-1,-\frac{1}{3},-1)$ of
multiplicity 3, being an $A_3$-point of the phase function.    
\\
\begin{quote}
{\tt    
   setring R;\\
   I = subst(I,y1,1); I = subst(I,y2,3);\\
   ring R0 = 0,(z1,z2,z3),dp;\\
   ideal I = imap(R,I);\\
   list Lfib = primdecGTZ(i);\hspace{2cm}\mbox{\it (list contains the points of the fiber)}.\\
   option(redSB);\\
   show(std(Lfib[1][2]));\\
   "mult =",vdim(std(Lfib[1][1]));  
}       
\end{quote}
Similar computations lead to similar results at the other 5 rational singularities of
the discriminant.

\end{document}